\numberwithin{equation}{section}
\newtheorem{theorem}{Theorem}[section]
\newtheorem{proposition}[theorem]{Proposition}
\theoremstyle{definition}
\newtheorem{definition}[theorem]{Definition}
\newtheorem{remark}[theorem]{Remark}
\begin{document}

\baselineskip=17pt

\title[Lyapunov exponents for random maps]{Lyapunov exponents for random maps}

\author[F. Nakamura]{Fumihiko NAKAMURA}
\address[F. Nakamura]{Faculty of Engineering, Kitami Institute of Technology, Hokkaido, 090-8507, JAPAN}
\email[F. Nakamura]{nfumihiko@mail.kitami-it.ac.jp}

\author[Y. Nakano]{Yushi NAKANO}
\address[Y. Nakano]{Department of Mathematics, Tokai University, Kanagawa 259-1292, JAPAN}
\email[Y. Nakano]{yushi.nakano@tsc.u-tokai.ac.jp}

\author[H. Toyokawa]{Hisayoshi TOYOKAWA}
\address[H. Toyokawa]{Faculty of Engineering, Kitami Institute of Technology, Hokkaido, 090-8507, JAPAN}
\email[H. Toyokawa]{h\_toyokawa@mail.kitami-it.ac.jp}

\begin{abstract}
It has been recently realized  that for abundant dynamical systems  on a compact manifold, the set of points for which Lyapunov exponents fail to exist, called the Lyapunov irregular set, has positive Lebesgue measure.
In the present paper, we show that under any physical noise, the Lyapunov irregular set has zero Lebesgue measure and
 the number of such Lyapunov exponents is finite.
This result is a Lyapunov exponent version of Ara\'{u}jo's theorem on the existence and finitude of time   averages.
Furthermore, we numerically compute  the Lyapunov exponents for a surface flow with an attracting heteroclinic connection, which enjoys the Lyapunov irregular set of positive Lebesgue measure, under a physical noise.
This paper also contains the proof of the disappearance of Lyapunov irregular behavior on a positive Lebesgue measure set for a surface flow with an attracting homoclinic/heteroclinic connection under a non-physical noise.
\end{abstract}

\subjclass[2010]{37H05  ; 37A25 }
\keywords{Lyapunov exponents, irregular sets, invariant measures, random maps}

\maketitle

\section{Introduction}
Let $f$ be a diffeomorphism on a closed  manifold $ M$ of dimension $d$. 
For a given continuous map $\psi : M\to \mathbb{R}$ the time average
\begin{equation}\label{eq:0317a}
\lim_{n\to\infty}\frac{1}{n}\sum_{j=0}^{n-1}\psi\circ f^j(x)
\end{equation}
sometimes fails to exist  for  every point $x$ in a set of  \emph{Lebesgue} positive measure (\cite{T,CV,KS,CYZ,HK,LR}), even though the Birkhoff ergodic theorem guarantees the existence of time averages for almost all points with respect to any \emph{invariant} measure.
The set of all points with no time average (for some $\psi$) is called the \emph{(Birkhoff) irregular set} in \cite{ABC} or the \emph{historic set} in \cite{R},
and 
 was also shown to be remarkably large in topological sense (e.g.~\cites{ABC,BLV1,BKNRS}) and thermodynamical sense (e.g.~\cites{BS,BLV2,EKL,PP}) for many dynamical systems.
However, Ara\'{u}jo showed in \cite{A} that surprisingly the Birkhoff irregular set disappears with respect to Lebesgue measure under physical noises (refer to Remark \ref{rmk:1} for detail; see  also \cite{A2001,A2003,AP2021,AT2005,N2017,KNS2019,C2019} for related works).

In the present paper, we focus on the \emph{Lyapunov irregular set} (\cite{ABC}), that is,  the set of all points $x\in M$ for which the Lyapunov exponent
\[
\lim_{n\to\infty}\frac{1}{n}\log\left\lVert Df^n(x)v\right\rVert
\]
does not exist for some vector $v\in\mathbb{R}^d\setminus\{0\}$ where the norm stands for the Euclidean norm.
We note that the Lyapunov irregular set is a null set with respect to any invariant probability measure 
 by the Oseledets multiplicative ergodic theorem. 
However, the Lyapunov irregular set, as well as the Birkhoff irregular set, for several maps was recently shown to be enormous in several senses \cites{BS,ABC,KLNS,OY,T2}.
In particular,  in 2008  Ott and Yorke \cite{OY} first gave an example of a dynamical system whose Lyapunov irregular set is of positive Lebesgue measure (as a classical surface flow with an \emph{attracting homoclinic  connection}, called a \emph{figure-8 attractor}), and it was 
  extended to abundant dynamical systems in 
    \cite{KLNS}. 
We emphasize that the Birkhoff irregular set and the Lyapunov irregular set do not have relations in general.
Indeed,  diffeomorphisms whose Birkhoff irregular set has   positive Lebesgue measure  but  Lyapunov irregular set has zero   Lebesgue measure were illustrated in \cites{CYZ}. 
Conversely, diffeomorphisms with the Lyapunov irregular set of positive measure and the Birkhoff irregular set of zero measure were exhibited in   \cite{OY,KLNS}  (see also \cite{F}).
Therefore, the following question naturally arises after the Ara\'{u}jo's theorem \cite{A}:
\begin{enumerate}
\item[]
Does the Lyapunov irregular set still have positive Lebesgue measure under physical noises?
\end{enumerate}
The purpose of this paper is to report that the answer is negative. 
Although the proof is elementary under the help of the paper \cite{A}, we believe that it is valuable to be  pointed out because of the importance of Lyapunov exponents in ergodic theory (\cite{BDV,KH,ASY,Viana}) and the previously-mentioned difference between Birkhoff irregular sets and Lyapunov irregular sets.

A novelty of the proof is
 to gather Ara\'ujo's obeservations in \cite{A} as a result for a \emph{deterministic} skew-product map (Theorem \ref{thm:Araujo}). 
Indeed   this skew-product formulation was already 
 realized by Ara\'ujo and played an indispensable role in his result (see e.g.~\cite[\S 2.4]{A}). 
However, it was not explicitly stated, and 
 a special emphasize in this paper would be helpful to prevent it from being
  ``buried'' in the long proof of \cite{A}.
 We also discuss possible applications of the skew-product formulation of Ara\'ujo's observations to zero Lebesgue measure of  irregular sets of local entropy and local dimension.

We will also see a numerical convergence of the Lyapunov exponent for a surface flow with an attracting heteroclinic connection under physical noises. This contrasts with the Ott-Yorke's numerical calculation in  \cite{OY}.
Furthermore, we prove the unique existence of the Lyapunov exponent for a surface flow with an attracting homoclinic/heteroclinic connection 
 under a \emph{non-physical} (impulsive) noise. 
 These descriptions may illustrate the meaning of our main theorem and imply a potential extension of Ara\'ujo's and our results.

\section{Main theorem}\label{s:main}

We first prepare notations 
from \cite{A}.
Let $M$ be a closed manifold of dimension $d$ equipped with the normalized Lebesgue measure $\mathrm{Leb}_M$. 
 A \emph{parametrized family} $f$ of $C^r$ diffeomorphisms on $ M$ with $r\geq 1$   is a differential map  from $B\times M$ to $M$ such that $f_t\equiv f(t, \cdot ) :M\to M$  is a $C^r$ diffeomorphism for all $t \in B$, where $B$ is the closed unit ball of a Euclidean space.
 Let $\mathcal B_B$ be the Borel $\sigma$-field of $B$ and $\mathrm{Leb}_B $  the normalized Lebesgue measure on $B$. 
 Let $(B^{\mathbb N}, \mathcal B_B^{\mathbb N}, \mathrm{Leb}_B  ^{\mathbb N})$ be the product space of the probability space $(B, \mathcal B_B, \mathrm{Leb}_B  )$.\footnote{
 In \cite{A}, instead of $(B^{\mathbb N}, \mathcal B_B^{\mathbb N}, \mathrm{Leb}_B  ^{\mathbb N})$, Ara\'ujo considered   the product space of a probability space $( B (a,\epsilon) , \mathcal B_{ B (a,\epsilon)}, \mathrm{Leb}_{B (a,\epsilon)}  )$ with some $a\in \mathrm{int} (B)$ and $\epsilon >0$ such that $B (a,\epsilon)\subset \mathrm{int} (B)$, where  $B(a,\epsilon) $ is the closed $\epsilon$-ball centered at $a$ and $\mathrm{int} (B)$ is the interior of $B$. 
 However, we can reduce it to our setting by considering $\tilde f( t, \cdot ) \coloneqq  f(  \epsilon (t+a),\cdot )$, $t\in B$.
}
For each $n\geq 1$, $\underline{t} =(t_1, t_2, \ldots ) \in B^{\mathbb N}$ and $x\in M$, we define  $f^n _{\underline t}(x) 
$ by 
\begin{equation}\label{eq:0318}
f^n_{\underline{t}}(x) =f_{t_n}\circ f_{t_{n-1}} \circ \cdots\circ f_{t_1}(x)
\end{equation}
and write $f^0_{\underline{t}}(x)=x$ for convenience. 
Notice that the $B$-valued random process $(\underline t =(t_1, t_2,\ldots ) \mapsto t_n)_{n\in \mathbb N}$ on $(B^{\mathbb N}, \mathcal B_B^{\mathbb N}, \mathrm{Leb}_B  ^{\mathbb N})$ is independently and identically distributed, so that we call $(n, \underline t, x) \mapsto f^n_{\underline t}(x)$ the \emph{i.i.d.~random dynamical system} induced by $f$ (refer to \cite{Arnold} 
 for 
the definition of
general random dynamical systems).
We also write $f_0^n$ for the usual $n$-th iteration of a single map $f_0: M\to M$ if it makes no confusion.
For each $x\in M$,  we let $\left(f^n_{(\cdot )}(x)\right) _* \mathrm{Leb}_B  ^{\mathbb N}$ denote the pushforward measure of $ \mathrm{Leb}_B  ^{\mathbb N}$ under  $f^n_{(\cdot )}(x) : B^{\mathbb N} \to M$, that is, $\left(\left(f^n_{(\cdot )}(x)\right) _* \mathrm{Leb}_B ^{\mathbb N}  \right)(A) =\mathrm{Leb}_B  ^{\mathbb N} \left(\left\{ \underline t\in B^{\mathbb N} \mid f^n_{\underline t }(x) \in A\right\} \right)$ for each Borel set $A\subset M$ (the measurability of $f^n_{(\cdot )}(x)  $  is ensured by \cite[Property 2.1]{A}). 
The following conditions are from \cite[Theorem 1]{A}.
\begin{definition}\label{d:phy}
The i.i.d.~random dynamical system induced by 
a parametrized family $f: B\times M \to M$ of $C^r$ diffeomorphisms is called \emph{physical} if 
there exists a $C^r$ diffeomorphism $f_0: M\to M$, an integer $n_0\geq 1$ and a real number $\xi_0>0$ such that for all $n\geq n_0$ and $x\in M$,
\begin{itemize}
\item[(A)] $\left\{ f^n_{\underline t} (x) \mid \underline t \in B^{\mathbb N} \right\}$ contains the $\xi_0$-ball centered at $f^n_0(x)$;
\item[(B)] $\left(f^n_{(\cdot )}(x)\right) _* \mathrm{Leb}_B  ^{\mathbb N}$ is absolutely continuous with respect to $\mathrm{Leb}_M$.
\end{itemize}
\end{definition}

See Theorem \ref{thm:Araujo} for the reason why we call such a random system ``physical'' from the viewpoint of physical measures.
Another justification for the name is given by the fact that the random map satisfying these conditions can be obtained from a Markov chain with smooth transition   probabilities (see e.g.~\cite{JKR2015}), which might appear often in physics.
Notice that in the deterministic case (i.e.~$f(t, \cdot )=f_0$ for each $t\in B$), the i.i.d.~random dynamical system induced by $f$ is not physical.
See \cite[Examples 1-4]{A} for examples of physical i.i.d.~random dynamical systems.
As for deterministic systems, we define the \emph{Lyapunov  irregular set} of the i.i.d.~random dynamical system induced by $f$ at $\underline t\in B^{\mathbb N}$ as the set of  points $x\in M$ for which 
\[
\lim_{n\to\infty}\frac{1}{n}\log\left\lVert Df^n_{\underline t}(x)v\right\rVert
\]
does not exist for some vector $v\in\mathbb{R}^d\setminus\{0\}$. 
Now we can state our main theorem:
\begin{theorem}\label{main}
Let $f: B\times M \to M$ be a parametrized family of  $C^r$ diffeomorphisms   with $r\geq 1$.
Suppose that the i.i.d.~random dynamical system induced by $f$ is physical.
Then, there exists  
 a    finitely many measurable sets   $\left( \mathcal V_i\right) _{1\leq i\leq \ell}$ of  $B^{\mathbb N}\times M$ satisfying   $(\mathrm{Leb}_B  ^{\mathbb N}\times \mathrm{Leb}_M)\left(\bigcup _{i=1}^\ell\mathcal V_i\right) =1$,   
real numbers  $\lambda _i^1 > \cdots > \lambda _i^{k(i)}$ and filtrations $\mathbb{R}^d=W_{(\underline{t},x)}^{i,1}\supset\cdots\supset W_{(\underline{t},x)}^{i,k(i)}\supset W_{(\underline{t},x)}^{i,k(i)+1}=\{0\}$ with $1\leq k(i)\leq d$  for  $1\leq i\leq \ell$, $(\underline t, x)\in \mathcal V_i$ such that 
\begin{equation}
\lim_{n\to\infty}\frac{1}{n}\log\left\lVert Df_{\underline{t}}^{n}(x)v\right\rVert=\lambda_i^j 
\end{equation}
for every $1\leq i\leq \ell$, $1\leq j\leq k(i)$, $(\underline t , x)\in \mathcal V_i$ and   $v\in W^{i,j}_{(\underline{t},x)}\backslash W^{i,j+1}_{(\underline{t},x)}$.

In particular, the Lyapunov irregular set at $\underline t$ has zero Lebesgue measure  for $\mathrm{Leb}_B  ^{\mathbb N}$-almost every $\underline t$.
\end{theorem}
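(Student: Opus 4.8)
The plan is to combine the skew-product reformulation of Theorem~\ref{thm:Araujo} with the Oseledets multiplicative ergodic theorem, using the smoothing condition (B) to pass from typicality with respect to the invariant measures to typicality with respect to Lebesgue measure. First I would pass to the skew product $F\colon B^{\mathbb N}\times M\to B^{\mathbb N}\times M$, $F(\underline t,x)=(\sigma\underline t,f_{t_1}(x))$ with $\sigma$ the shift, so that $F^n(\underline t,x)=(\sigma^n\underline t,f^n_{\underline t}(x))$ and the derivative cocycle is $A(\underline t,x)=Df_{t_1}(x)$, $A^{(n)}(\underline t,x)=Df^n_{\underline t}(x)$. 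Applying Theorem~\ref{thm:Araujo} to $F$ yields finitely many ergodic physical measures $\mu_1,\dots,\mu_\ell$ such that $\mathrm{Leb}_B^{\mathbb N}\times\mathrm{Leb}_M$-almost every point lies in the Birkhoff basin of some $\mu_i$. Because the fibre value $f^n_{\underline t}(x)$ depends only on the past coordinates $t_1,\dots,t_n$ while the base value $\sigma^n\underline t$ consists of the independent future coordinates, the empirical distributions converge to a product, and each $\mu_i$ has the form $\mu_i=\mathrm{Leb}_B^{\mathbb N}\times\rho_i$ with $\rho_i$ an ergodic stationary measure; writing $\rho_i=P^{n_0}\rho_i=\int_M P^{n_0}\delta_x\,d\rho_i(x)$ for the Markov operator $P$ and invoking condition (B), each $\rho_i$ is an average of absolutely continuous measures, so $\rho_i\ll\mathrm{Leb}_M$ and hence $\mu_i\ll\mathrm{Leb}_B^{\mathbb N}\times\mathrm{Leb}_M$.

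Next I would apply the Oseledets theorem to $A$ over each system $(F,\mu_i)$. As $f$ is $C^r$ on the compact set $B\times M$ with each $f_t$ a diffeomorphism, the functions $\log^+\|A\|$ and $\log^+\|A^{-1}\|$ are bounded, so Oseledets produces exponents $\lambda_i^1>\cdots>\lambda_i^{k(i)}$, constant on $\mu_i$, and a measurable filtration $\mathbb R^d=W^{i,1}_{(\underline t,x)}\supset\cdots\supset W^{i,k(i)+1}_{(\underline t,x)}=\{0\}$ with $\frac1n\log\|Df^n_{\underline t}(x)v\|\to\lambda_i^j$ for $v\in W^{i,j}_{(\underline t,x)}\setminus W^{i,j+1}_{(\underline t,x)}$, all valid on a set $G_i$ of full $\mu_i$-measure. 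This provides the exponents and filtrations, and the bounds $\ell<\infty$ and $k(i)\le d$ are immediate.

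The core step is to promote convergence from the $\mu_i$-full set $G_i$ to a Lebesgue-full set, for $G_i$ is only $\rho_i$-typical while the Birkhoff basin is generally far larger than $\operatorname{supp}\rho_i$. I would use the cocycle identity $Df^{N+n}_{\underline t}(x)=Df^n_{\sigma^N\underline t}(f^N_{\underline t}(x))\,Df^N_{\underline t}(x)$: since $Df^N_{\underline t}(x)$ is invertible, if $F^N(\underline t,x)\in G_i$ then the forward exponents at $(\underline t,x)$ are again $\lambda_i^1,\dots,\lambda_i^{k(i)}$ and the filtration at $(\underline t,x)$ is the pullback $\bigl(Df^N_{\underline t}(x)\bigr)^{-1}W^{i,j}_{F^N(\underline t,x)}$. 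Thus it suffices that Lebesgue-almost every point eventually enters $G_i$. Here condition (B) enters through the factorization $F^N_*(\mathrm{Leb}_B^{\mathbb N}\times\mathrm{Leb}_M)=\mathrm{Leb}_B^{\mathbb N}\times P^N\mathrm{Leb}_M$, valid because the coordinates $t_1,\dots,t_N$ used in the fibre are independent of the shifted base $\sigma^N\underline t$; as $P^{n_0}\mathrm{Leb}_M\ll\mathrm{Leb}_M$, this push-forward is absolutely continuous, so $(\mathrm{Leb}_B^{\mathbb N}\times\mathrm{Leb}_M)(F^{-N}G_i)=(\mathrm{Leb}_B^{\mathbb N}\times P^N\mathrm{Leb}_M)(G_i)$ for $N\ge n_0$. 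Taking $\mathcal V_i$ to be the Birkhoff basin of $\mu_i$ intersected with $\bigcup_{N\ge n_0}F^{-N}G_i$ then gives sets on which the displayed convergence holds.

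The hard part is to show that $\bigcup_i\mathcal V_i$ still has full Lebesgue measure, i.e.\ to match the Birkhoff basins from Theorem~\ref{thm:Araujo} with the absolutely continuous sets detected by the $G_i$: one must verify that $G_i$ is full not merely for $\rho_i$ but for the forward-pushed measures $P^N\mathrm{Leb}_M$ restricted to the basin of $\rho_i$. I expect this from the total-variation convergence $P^N\mathrm{Leb}_M\to\sum_i c_i\rho_i$---a consequence of the smoothing in condition (B) and the ergodic decomposition of the absolutely continuous stationary measures---together with the fact that the Birkhoff basins of the finitely many ergodic components exhaust Lebesgue-almost every point; absolute continuity of the push-forward then transports the $\mu_i$-full set $G_i$ onto a Lebesgue-full set. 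Finally, since the strata $W^{i,j}_{(\underline t,x)}\setminus W^{i,j+1}_{(\underline t,x)}$ cover $\mathbb R^d\setminus\{0\}$, the Lyapunov exponent exists for every $v$ at each point of $\bigcup_i\mathcal V_i$; as this union has full $\mathrm{Leb}_B^{\mathbb N}\times\mathrm{Leb}_M$ measure, Fubini yields that for $\mathrm{Leb}_B^{\mathbb N}$-almost every $\underline t$ the corresponding slice is $\mathrm{Leb}_M$-full, which is the asserted vanishing of the Lyapunov irregular set at $\underline t$.
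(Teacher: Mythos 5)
Your overall architecture matches the paper's proof: pass to the skew product, invoke Theorem \ref{thm:Araujo} to obtain finitely many absolutely continuous ergodic physical measures whose generic sets cover $B^{\mathbb N}\times M$ up to Lebesgue-null sets, apply the Oseledets theorem to the derivative cocycle (your boundedness observation for $\log^+\lVert A\rVert$ is the same as the paper's), and transport exponents and filtrations backwards along orbits using invertibility of $Df^N_{\underline t}(x)$ once the orbit enters the Oseledets-full set. Up to that point you are reconstructing the paper's steps, and the final Fubini argument for the ``in particular'' clause also agrees.

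The gap is precisely at what you yourself call the hard part. You propose to show that Lebesgue-almost every point of the basin eventually enters the Oseledets set $G_i$ by invoking total-variation convergence $P^N\mathrm{Leb}_M\to\sum_i c_i\rho_i$. That convergence is established neither in this paper nor in Ara\'ujo's, and it is false in general in this setting: minimal invariant domains are allowed to be periodic cycles $(U_0,\dots,U_{k-1})$ with $f^n_{\underline t}(U_j)\subset U_{(j+n)\bmod k}$ (this cyclic structure is built into the definition of invariant domain in Section 3), and for $k\ge 2$ the push-forwards $P^N\mathrm{Leb}_M$ rotate around the cycle, so the sequence has several distinct total-variation limit points and cannot converge to the stationary measure; at best Ces\`aro averages converge. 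So, as written, your key step fails exactly in the cyclic case that Ara\'ujo's framework is designed to include. The paper's bridge is different and more elementary: since $P_i$ is absolutely continuous with Radon--Nikod\'ym density $h_i$ (positive a.e.\ on $\mathrm{supp}(P_i)$), every subset of $\mathrm{supp}(P_i)$ of positive $\mathrm{Leb}_B^{\mathbb N}\times\mathrm{Leb}_M$-measure has positive $P_i$-measure (decompose by the level sets $\{\tfrac{1}{n+1}\le h_i<\tfrac1n\}$); the contrapositive applied to the $P_i$-null set $(B^{\mathbb N}\times M_i)\setminus\Gamma$ shows that the Oseledets set $\Gamma$ is Lebesgue-full inside $B^{\mathbb N}\times M_i$, and the identity $G(P_i)=\bigcup_{k\ge 0}S^{-k}(B^{\mathbb N}\times M_i)$ modulo Lebesgue-null sets, which is part of Theorem \ref{thm:Araujo}, then gives that Lebesgue-a.e.\ point of the basin hits $\Gamma$ in finite time. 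No statement about convergence of the iterates $P^N\mathrm{Leb}_M$ is needed anywhere. Your argument can be repaired by replacing the total-variation convergence claim with this density/level-set argument (or, with extra work, a Ces\`aro-averaged version of your claim), but as proposed the decisive step is unsupported and, in the stated form, untrue.
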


\section{Proof}
Let $f: B\times M \to M$ be a parametrized family of $C^r$ diffeomorphisms.  
We define a skew-product map $S : B^{\mathbb N} \times M \to B^{\mathbb N} \times M$ by 
\[
S (\underline t , x) =\left((t_2, t_3,\ldots ), f_{t_1} (x)\right)\quad \text{for $\underline t=(t_1,t_2,\ldots )\in B^{\mathbb N}$ and $x\in M$},
\]
and call it the \emph{skew-product map induced by $f$.}
Note that
\begin{equation}\label{eq:0318b}
S ^n (\underline t, x) =\left((t_{n+1}, t_{n+2},\ldots ), f_{\underline t} ^n(x)\right).
\end{equation}
Recall that the \emph{generic set} $G(P)$ (or the \emph{ergodic basin})  of a probability measure $ P$ 
 by $S$ is the set of points $(\underline t, x)$ such that 
\[
\lim _{n\to \infty} \frac{1}{n} \sum _{j=0}^{n-1} \psi \circ S^j (\underline t, x) = \int _{B^{\mathbb N}\times M} \psi d P
\]
for every continuous function $\psi: B^{\mathbb N}\times M \to \mathbb R$.
Notice that $P(G(P)) =1$ when $P$ is an ergodic invariant probability measure by the Birkhoff ergodic theorem.  
Furthermore, we call a probability measure  $P$ \emph{physical} if $(\mathrm{Leb}_B  ^{\mathbb N}\times \mathrm{Leb}_M )(G(P)) >0$    (cf.~\cite{BDV}).
The following is a skew-product formulation of observations in Ara\'ujo \cite{A}.
We denote the support of a Borel probability measure $\nu$ by $\mathrm{supp} (\nu )$.
\begin{theorem}\label{thm:Araujo}
Let $S$ be the skew-product map induced by   a parametrized family $f: B\times M\to M$ of  $C^r$ diffeomorphisms with $r\geq 1$. 
Suppose that  the i.i.d.~random dynamical system induced by $f$ is physical. 
Then there are finitely many physical absolutely continuous ergodic invariant probability measures  $( P_i) _{1\leq i\leq \ell}$ of $S$ such that 
$(\mathrm{Leb}_B  ^{\mathbb N}\times \mathrm{Leb}_M)\left( \bigcup _{i=1}^\ell G(P_i)\right) =1$.
Furthermore, for every $1\leq i\leq \ell$ and $(\mathrm{Leb}_B  ^{\mathbb N}\times \mathrm{Leb}_M)$-almost every  $(\underline t, x) \in G(P_i)$, there exists an integer $k\geq 0$ such that $S^k (\underline t, x) \in \mathrm{supp}(P_i)$.
\end{theorem}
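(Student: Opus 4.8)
The plan is to read Theorem \ref{thm:Araujo} as a translation of the fiberwise statements of Ara\'ujo \cite{A} into the skew-product $S$, so that the argument divides into three steps: importing from \cite{A} the stationary measures together with Lebesgue-almost-sure convergence of the fiberwise empirical measures; lifting them to $S$-invariant measures; and upgrading the fiberwise convergence to convergence of the skew-product empirical measures.

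First I would record, from \cite[Theorem 1]{A}, that under the physical hypothesis there are finitely many absolutely continuous ergodic stationary measures $\mu_1,\dots,\mu_\ell$ on $M$, meaning $\int_B (f_t)_*\mu_i\,d\mathrm{Leb}_B(t)=\mu_i$, such that for $(\mathrm{Leb}_B^{\mathbb N}\times\mathrm{Leb}_M)$-almost every $(\underline t,x)$ the empirical measures $\frac1n\sum_{j=0}^{n-1}\delta_{f^j_{\underline t}(x)}$ converge weakly to $\mu_{i(\underline t,x)}$ for some index $i(\underline t,x)$. I then set $P_i\coloneqq\mathrm{Leb}_B^{\mathbb N}\times\mu_i$. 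A short computation with Fubini's theorem, using that the first coordinate $t_1$ is independent of the rest under $\mathrm{Leb}_B^{\mathbb N}$, shows that $S$-invariance of $P_i$ is equivalent to stationarity of $\mu_i$, while $P_i\ll\mathrm{Leb}_B^{\mathbb N}\times\mathrm{Leb}_M$ is immediate from $\mu_i\ll\mathrm{Leb}_M$. Ergodicity of $P_i$ under $S$ follows from the standard correspondence (see \cite{Arnold}) identifying $S$-invariant functions on $B^{\mathbb N}\times M$ with functions on $M$ fixed by the transition operator $\phi\mapsto\int_B\phi\circ f_t\,d\mathrm{Leb}_B(t)$, which are constant $\mu_i$-almost everywhere because $\mu_i$ is ergodic.

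The heart of the proof, and the step I expect to be the main obstacle, is to promote the fiberwise convergence to the convergence $\frac1n\sum_{j=0}^{n-1}\delta_{S^j(\underline t,x)}\to P_i$ demanded by the definition of $G(P_i)$; the difficulty is that $S^j(\underline t,x)=\bigl((t_{j+1},t_{j+2},\dots),f^j_{\underline t}(x)\bigr)$ couples the shifted noise with the fiber point, whereas \cite{A} controls only test functions of $x$. By the Stone--Weierstrass theorem it is enough to treat $\Psi(\underline t,x)=g(t_1,\dots,t_m)h(x)$ with $h\in C(M)$, for which one must show
\[
\frac1n\sum_{j=0}^{n-1} g(t_{j+1},\dots,t_{j+m})\,h\bigl(f^j_{\underline t}(x)\bigr)\ \longrightarrow\ \Bigl(\int g\,d\mathrm{Leb}_B^{\otimes m}\Bigr)\Bigl(\int h\,d\mu_i\Bigr)=\int\Psi\,dP_i.
\]
Writing $\bar g=\int g\,d\mathrm{Leb}_B^{\otimes m}$ and splitting $g=\bar g+(g-\bar g)$, the $\bar g$-term converges by the imported fiberwise statement, while the remainder is $\frac1n\sum_j\xi_j$ with $\xi_j=\bigl(g(t_{j+1},\dots,t_{j+m})-\bar g\bigr)h\bigl(f^j_{\underline t}(x)\bigr)$. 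Since $f^j_{\underline t}(x)$ is measurable with respect to $(t_1,\dots,t_j)$ while the block $(t_{j+1},\dots,t_{j+m})$ is independent of it with mean-zero factor $g-\bar g$, one has $\mathbb E[\xi_j\mid\mathcal F_j]=0$ for the filtration $\mathcal F_j=\sigma(t_1,\dots,t_j)$; grouping the bounded sequence $(\xi_j)$ into the $m$ arithmetic progressions modulo $m$ produces martingale difference sequences, so the martingale strong law gives $\frac1n\sum_j\xi_j\to0$ for $\mathrm{Leb}_B^{\mathbb N}$-almost every $\underline t$ and each fixed $x$. Fubini's theorem over $x$, a countable uniformly dense family of test functions, and intersection with the full-measure set from \cite{A} then place $(\mathrm{Leb}_B^{\mathbb N}\times\mathrm{Leb}_M)$-almost every $(\underline t,x)$ in some $G(P_i)$; discarding indices whose generic set is Lebesgue-null leaves physical measures whose basins still cover, which is the first assertion.

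For the last assertion I would exploit that $\mathrm{supp}(P_i)=B^{\mathbb N}\times\mathrm{supp}(\mu_i)$, so that $S^k(\underline t,x)\in\mathrm{supp}(P_i)$ is equivalent to $f^k_{\underline t}(x)\in\mathrm{supp}(\mu_i)$. Stationarity gives $\mathrm{supp}(\mu_i)=\overline{\bigcup_{t\in B}f_t(\mathrm{supp}(\mu_i))}$, hence $f_t(\mathrm{supp}(\mu_i))\subseteq\mathrm{supp}(\mu_i)$ for every $t$; iterating and then invoking condition (A), the reachable set from any $y\in\mathrm{supp}(\mu_i)$ at a time $n\geq n_0$ both lies in $\mathrm{supp}(\mu_i)$ and contains a $\xi_0$-ball, so $\mathrm{supp}(\mu_i)$ has nonempty interior. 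Minimality of the support then forces $\mu_i(\mathrm{int}\,\mathrm{supp}(\mu_i))>0$, whence $P_i\bigl(B^{\mathbb N}\times\mathrm{int}\,\mathrm{supp}(\mu_i)\bigr)>0$; since this set is open and $(\underline t,x)\in G(P_i)$ makes the empirical measures converge weakly to $P_i$, the portmanteau theorem forces $S^k(\underline t,x)$ into it for some $k$, which gives the claim for every point of $G(P_i)$. I thus expect the decoupling estimate to be the only genuinely delicate point, the remaining steps amounting to bookkeeping that transfers statements on $M$ to statements on $B^{\mathbb N}\times M$.
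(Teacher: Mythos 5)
Your proposal is correct, but it reaches the theorem by a genuinely different route than the paper. The paper does not invoke \cite[Theorem 1]{A} as a black box: it imports the internal structure of \cite{A} --- the finitely many minimal invariant domains $M_1,\dots,M_\ell$ produced by hypothesis (A) (\cite[Section 6]{A}), the unique absolutely continuous ergodic stationary measure $\mu_i$ with $\mathrm{supp}(\mu_i)=\overline{M_i}$ produced by hypothesis (B) (\cite[Section 7]{A}), and the partition result \cite[(18)]{A} that for $\mathrm{Leb}_M$-a.e.\ $x$ the sets $V_i(x)$ of noise paths eventually sending $x$ into $M_i$ partition $B^{\mathbb N}$ up to null sets. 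It then obtains genericity by applying Birkhoff to $P_i=\mathrm{Leb}_B^{\mathbb N}\times\mu_i$ on $B^{\mathbb N}\times M_i$, transferring the $P_i$-null exceptional set to a $(\mathrm{Leb}_B^{\mathbb N}\times\mathrm{Leb}_M)$-null set through the Radon--Nikod\'ym density of $P_i$, and using that Birkhoff averages are insensitive to a finite time shift; the ``furthermore'' clause then costs nothing, since $\mathcal V_i=\bigcup_{k\ge 0}S^{-k}(B^{\mathbb N}\times M_i)$ and $M_i\subset\mathrm{supp}(P_i)$. You instead treat the conclusion of \cite[Theorem 1]{A} (fiberwise convergence of empirical measures for Lebesgue-a.e.\ $(\underline t,x)$) as the only import, and supply two arguments the paper never needs: the martingale decoupling that upgrades fiberwise convergence to convergence of the skew-product empirical measures, and the support/portmanteau argument for the hitting property, where hypothesis (A) together with forward invariance of $\mathrm{supp}(\mu_i)$ under every $f_t$ gives $\mathrm{supp}(\mu_i)$ nonempty interior. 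Both of your steps are sound: the arithmetic-progression trick does produce bounded martingale difference sequences, so the martingale strong law applies for each fixed $x$ and Fubini finishes; and a nonempty open subset of a support automatically has positive measure, so portmanteau applies. What the paper's route buys is that the finite-hitting-time mechanism --- the point emphasized in Remark \ref{rmk:1} and reused in the proof of Theorem \ref{main} --- is exhibited explicitly through $\mathcal V_i=\bigcup_k S^{-k}(B^{\mathbb N}\times M_i)$, with nothing beyond the Birkhoff theorem as machinery; what your route buys is modularity (it works whenever a fiberwise a.e.\ convergence theorem to finitely many ergodic stationary measures is available, without access to the internals of \cite{A}) and a slightly stronger hitting statement, since your portmanteau argument yields a finite hitting time for \emph{every} point of $G(P_i)$ rather than almost every point. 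One caveat applies equally to both proofs: the passage from ergodicity of the stationary measure $\mu_i$ to ergodicity of $P_i$ under $S$ is a standard but nontrivial correspondence, which you delegate to \cite{Arnold} and the paper delegates to \cite[Sections 2 and 7]{A}.
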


\begin{proof} 
According to  \cite[Section 3]{A}, we say that a finite pair of open sets $D=(U_0,\ldots , U_{k-1})$ is an  \emph{invariant domain} if $\overline{U_i} \cap \overline{U_j} =\emptyset$ for every $i\neq j$ and $f^n_{\underline t}(U_i) \subset U_{(n+i) \mod k}$ for every $n\in \mathbb N$, $\underline t\in B^{\mathbb N}$ and $0\leq i\leq k-1$. 
Let $\mathcal D$ be the set of all invariant domains and introduce a partial order $\preceq$ of $\mathcal D$ by saying $D\preceq D'$ for  $D=(U_0,\ldots , U_{k-1})$ and $D'=(U_0',\ldots , U_{k'-1}')$
if there are $i, i' \in \mathbb N$ such that  $D_{(i+j) \mod k} \subseteq D'_{(i'+j) \mod k'}$ for each $j$. 
A \emph{minimal} invariant domain  is an invariant domain  being minimal with respect to $\preceq$.
It was shown  in \cite[Section 6]{A} that the hypothesis (A) in Definition \ref{d:phy}  implies that there are at most finitely many minimal invariant domains $M_1, \ldots , M_\ell $. 
  
Moreover, in \cite[Section 7]{A} it was shown that the  hypothesis (B) in Definition \ref{d:phy} implies that there is a unique absolutely continuous ergodic stationary probability measure $\mu _i$ whose support coincides with  the closure of  $M_i$ for each $1\leq i\leq \ell$.
Here we mean by stationary that $\int _B \mu _i(f_t^{-1} A) \mathrm{Leb}_B  (dt ) = \mu _i(A)$ for each Borel set $A$,  and by  ergodic that if $\int _B \varphi \circ f_t(x) \mathrm{Leb}_B  (dt ) = \varphi (x)$ almost everywhere then   $\varphi $ is constant almost everywhere for each bounded measurable function $\varphi$.
Hence, it follows from \cite[Sections 2 and 7]{A} that $P_i\coloneqq  \mathrm{Leb}_B  ^{\mathbb N}\times \mu_i$ is a unique absolutely continuous ergodic invariant probability measure  whose support coincides with   the closure of  $B^{\mathbb N}\times M_i$.
Furthermore, it was proven that if one defines an open set $V_{i}(x)$ for $1\leq i\leq \ell$ and $x\in M$ by
\[
V_i(x)=\bigcup^\infty_{k=1} V_{i,k}(x), \quad V_{i,k}(x)=\left\{\underline{t}\in B^{\mathbb N} \mid  f^k_{\underline{t}}(x)\in M_i \right\},
\]
then $\left(V_i (x) \right)_{1\leq i\leq \ell}$ is a partition of $B^{\mathbb N}$ up to $\mathrm{Leb}_B  ^{\mathbb N}$-zero measure sets  for $\mathrm{Leb}_M$-almost every $x\in M$ (cf.~\cite[(18)]{A}), 
so that 
$(\mathrm{Leb}_B  ^{\mathbb N}\times \mathrm{Leb}_M)\left(\bigcup _{i=1}^\ell \mathcal V_i\right)=1 $ where $\mathcal V_i\coloneqq \bigcup _{x\in M} V_i(x) \times \{ x\}$. 

By the Birkhoff ergodic theorem, 
for each $1\leq i\leq N$,  there exists a $P_i$-full measure sets $\Gamma  \subset B^{\mathbb N}\times M_i$ such that for any $(\underline t, x) \in \Gamma$ and continuous function $\psi :B^{\mathbb N}\times M\to\mathbb R$, 
\[
\lim _{n\to\infty} \frac{1}{n} \sum _{j=0}^{n-1} \psi \circ S^j(\underline t,x) = \int \psi dP _i.
\]
Note that for each measurable set $A\subset \mathrm{supp} (P_i)$, if  $(\mathrm{Leb}_B  ^{\mathbb N}\times \mathrm{Leb}_M)(A) >0$, then one can find an integer $n$ such that $(\mathrm{Leb}_B  ^{\mathbb N}\times \mathrm{Leb}_M)(A_n) >0$, where $A_n\coloneqq\{  (\underline t,x) \in A\mid \frac{1}{n+1} \leq h_i(\underline t,x) < \frac{1}{n} \} $ ($n\geq 1$), $A_0\coloneqq\{  (\underline t,x) \in A\mid h_i(\underline t,x) \geq 1 \} $ and $h_i$ is the Radon-Nikod\'{y}m derivative of $P_i$. 
Thus we get
$
P_i(A) \geq (\mathrm{Leb}_B  ^{\mathbb N}\times \mathrm{Leb}_M)(A_n)/(n+1) >0.
$ 
By applying the contraposition of this implication to the $P_i$-zero measure set $( B^{\mathbb N}\times M_i) \setminus \Gamma$, we get $(\mathrm{Leb}_B  ^{\mathbb N}\times \mathrm{Leb}_M)(( B^{\mathbb N}\times M_i) \setminus \Gamma)=0$.
Hence, observing $\mathcal V_i = \bigcup _{k=0}^\infty S^{-k}(B^{\mathbb N} \times M_i) $, we have
\begin{equation}\label{eq:0205}
\left(\mathrm{Leb}_B  ^{\mathbb N}\times \mathrm{Leb}_M\right) \left(\mathcal V_i \setminus \bigcup _{k=0}^\infty S^{-k}\Gamma \right)
=0.
\end{equation}
Note also that if $(\underline s, y) = S^k(\underline t, x)$ with some $k\geq 0$, then for any continuous function $\psi :B^{\mathbb N}\times M\to \mathbb R$ we have 
\begin{equation}\label{eq:0203b}
\left\vert \frac{1}{n} \sum _{j=0}^{n-1} \psi \circ S^j(\underline t, x) - \frac{1}{n} \sum _{j=0}^{n-1} \psi \circ S^j(\underline s, y) \right\vert \leq   \frac{2k  }{n}\sup _{(\underline t, x)\in B^{\mathbb N}\times M}\vert \psi (\underline t, x) \vert  \to 0
\end{equation}
as $n\to\infty$
(recall that both $B$ and  $M$ are   compact).
Hence, for $(\mathrm{Leb}_B  ^{\mathbb N}\times \mathrm{Leb}_M)$-almost every $(\bar t, x)\in \mathcal V_i$, we get $\lim _{n\to\infty} \frac{1}{n} \sum _{j=0}^{n-1} \psi \circ S^j(\underline t,x) = \int \psi dP _i$.
That is, $\mathcal V_i \subset G(P_i)$ up to $(\mathrm{Leb}_B  ^{\mathbb N}\times \mathrm{Leb}_M)$-zero measure sets, and thus, we conclude $(\mathrm{Leb}_B  ^{\mathbb N}\times \mathrm{Leb}_M)\left(\bigcup _{i=1}^\ell G(P_i)\right)=1$.
This completes the proof.
\end{proof}

We now prove  Theorem \ref{main}. 
Due to Theorem \ref{thm:Araujo}, there are finitely many  physical absolutely continuous ergodic invariant probability measures $P_1, \ldots , P_\ell$ of $S$ such that the union of the generic set of $P_i$'s covers 
 $B^{\mathbb N}\times M$ up to $\mathrm{Leb}_B  ^{\mathbb N}\times \mathrm{Leb}_M$ zero measure sets. 
Define a measurable map $A^{(n)}: B^{\mathbb N}\times M\to GL(d,\mathbb R)$ for $n\geq 0$ by
\[
A^{(n)}(\underline t, x)= Df_{\underline t}^n(x).
\]
Then it follows from \eqref{eq:0318} and \eqref{eq:0318b}  that
\begin{eqnarray*}
A^{(n)}(\underline t, x) &=& Df_{t_n} \left(f_{\underline t}^{n-1}(x)\right) \cdot Df_{t_{n-1}} \left(f_{\underline t}^{n-2}(x)\right)\cdots Df_{t_1} (x) \\
&=& A^{(1)} \circ S^{n-1}(\underline t, x)\cdot A^{(1)} \circ S^{n-2}(\underline t, x) \cdots A^{(1)} (\underline t, x),
\end{eqnarray*}
and so, $(A^{(n)})_{n\geq 0}$ satisfies the cocycle property for $S$, that is,
\[
A^{(n+m)}(\underline t, x) = A^{(n)} \circ S^m(\underline t, x) \cdot A^{(m)}(\underline t ,x)
\]
for every $n, m\geq 0$ and $(\underline t, x) \in B^{\mathbb N}\times M$. 
Furthermore, since $f: B\times M\to M$ is a differential map and $B$ is compact, the function 
\begin{equation}\label{eq:0203a}
(\underline t, x) \mapsto \max\left\{ \log \left\lVert A^{(1)}(\underline t, x)\right\rVert , 0\right\} :B^{\mathbb N}\times M\to \mathbb R
\end{equation}
is bounded, in particular, $P_i$-integrable for each $1\leq i\leq \ell$.

Hence, we can apply  the Oseledets multiplicative ergodic theorem to the cocycle $(A^{(n)})_{n\geq 0}$ over the ergodic measure-preserving system $S$ on $(B^{\mathbb N}\times M, P_i)$ for each $1\leq i\leq \ell$: 
There are  a $P_i$-full measure set $\Gamma$,   real numbers $\lambda_i^1>\cdots>\lambda_i^{k(i)}$ 
and a filtration $\mathbb{R}^d=W_{(\underline{t},x)}^{i,1}\supset\cdots \supset W_{(\underline{t},x)}^{i,k(i)}\supset W_{(\underline{t},x)}^{i,k(i)+1}=\{0\}$ with some integer $1\leq k(i) \leq d$ such that 
 $A^{(1)}(\underline{t},x)W_{(\underline{t},x)}^{i,j}=W_{S(\underline{t},x)}^{i,j}$ and
\[
\lim_{n\to\infty}\frac{1}{n}\log \left\lVert Df_{\underline{t}}^n(x)v\right\rVert = \lim_{n\to\infty}\frac{1}{n}\log \left\lVert A^{(n)}(\underline{t},x)v\right\rVert=\lambda_i^j\quad\text{for all $v\in W^{i,j}_{(\underline{t},x)}\backslash W^{i,j+1}_{(\underline{t},x)}$}
\]
for all $(\underline{t},x) \in \Gamma$.
On the other hand, $G(P_i)$ coincides with $ \bigcup _{k=0}^\infty S^{-k}(B^{\mathbb N} \times M_i)$ up to $(\mathrm{Leb}_B  ^{\mathbb N}\times \mathrm{Leb}_M)$-zero measure sets  by virtue of Theorem \ref{thm:Araujo},  so by repeating  the argument for \eqref{eq:0205} we get
\[
\left(\mathrm{Leb}_B  ^{\mathbb N}\times \mathrm{Leb}_M\right) \left(G(P_i) \setminus \bigcup _{k=0}^\infty S^{-k}\Gamma \right)
=0.
\]
Observe also that if $(\underline s, y) =S^k(\underline t, x)$ with some $k\geq 0$, then for each nonzero vector $v$, by letting $w\coloneqq Df^k_{\underline t}(x) v$, we have
\begin{equation}\label{eq:0203c}
\left\vert \frac{1}{n} \log \left\Vert Df^n_{\underline t}(x)v\right\Vert  -  \frac{1}{n} \log \left\Vert Df^n_{\underline s}(y)w\right\Vert \right\vert \leq   \frac{2k}{n} \sup _{(\underline t, x)\in B^{\mathbb N}\times M}\log \left\Vert A^{(1)} (\underline t, x)\right\Vert \to 0
\end{equation}
as $n\to\infty$ (recall \eqref{eq:0203a}). Therefore,  
since $(\mathrm{Leb}_B  ^{\mathbb N}\times \mathrm{Leb}_M)\left(\bigcup _{i=1}^\ell G(P_i)\right) =1$ due to Theorem \ref{thm:Araujo}, 
by letting $\mathcal V_i \coloneqq G(P_i)$, this completes the proof of Theorem \ref{main}.

\begin{remark}\label{rmk:1}
We emphasize that it is essential in the proof of Ara\'ujo's and our theorems that, as Theorem \ref{thm:Araujo} stated,  the orbit of a point in the generic set of $P_i$ hits the support of $P_i$ in a \emph{finite time}. 
This property (together with \eqref{eq:0203b} and \eqref{eq:0203c}) allows us to apply classical ergodic theorems for $(\mathrm{Leb}_B  ^{\mathbb N}\times \mathrm{Leb}_M)$-almost every $(\underline t, x)$.
This is contrastive to the deterministic case in which the orbit of a point in a positive Lebesgue measure set may not hit  the support of any ergodic invariant measure in a finite time (as examples in Sections \ref{s:ne} and \ref{s:npn}), and thus the Birkhoff/Lyapunov irregular sets may have positive Lebesgue measure. 

Furthermore, 
we remark that the (thermodynamical) largeness of the irregular set of local entropy
 and the irregular set  of local dimension were discussed in \cite{BS}.
As Birkhoff and Lyapunov irregular sets, these irregular sets also have zero measure   with respect to    invariant measures in an appropriate setting  due to, e.g., Shannon-McMillan-Breiman theorem for local entropy and  Ledrappier-Young or Barreira-Pesin-Schmeling theorem for local dimension (refer to \cite{BW2006} and references therein).
Therefore, it seems natural to hope that Theorem \ref{thm:Araujo} is   a key tool to establish zero Lebesgue measure of these irregular sets under physical noise. 
\end{remark}

\section{Numerical example}\label{s:ne}

In this section, we experiment on the computation of a finite time flow Lyapunov exponent for a system with an attracting heteroclinic connection under additive noises which is physical perturbation. 
The model (without any perturbation) was originally raised by Ott and Yorke in \cite{OY} and was illustrated via numerical simulation to exhibit absence of the Lyapunov exponent.
They considered the following differential equation over $[0,\pi]^2$,
\begin{eqnarray}\label{eqot}
\begin{cases}
\dot{x}= \cos y \sin x - a \cos x \sin x\quad(\eqqcolon f_1(x,y))\\
\dot{y}=-\cos x \sin y - a \cos y \sin y\quad(\eqqcolon f_2(x,y))
\end{cases}\quad\quad(a\in(0,1)),
\end{eqnarray}
and calculated numerically the finite time flow Lyapunov exponent $\lambda(t,{\bm x},{\bm f}({\bm x}))$ where ${\bm f}=(f_1,f_2)$ and ${\bm x}=(x,y)$. 
Here the finite time flow Lyapunov exponent is defined by
\[
\lambda(t,{\bm x},v)=\frac{1}{t}\log\left\Vert D\varphi({\bm x},t)v \right\Vert,
\]
where $\varphi({\bm x},t)$ is the continuous flow induced by \eqref{eqot}.  
Their numerical result showed an oscillating graph of $\lambda(t,{\bm x},{\bm f}({\bm x}))$ that means  non-existence of the Lyapunov exponent (see also Figure \ref{fig1}).
In what follows, we perturb this system with additive noises from the interval $[-\epsilon,\epsilon]$, for some $\epsilon>0$, and show the numerical evidence of Theorem \ref{main} that the amplitude of the oscillation of the finite time flow Lyapunov exponent decreases and vanishes.

\begin{remark}
For time averages, it was proven by Takens \cite{T} that any surface flow with dissipative heteroclinically connected two saddle fixed points, which is called the Bowen flow and whose dynamics is quite similar to that of \eqref{eqot}, has the Birkhoff irregular set of positive Lebesgue measure (while the mechanism for the absence of time averages of the Bowen flow is different from the mechanism for the absence of Lyapunov exponents; see, e.g., the remark after Proposition 1.1 in \cite{KLNS}).
On the other hand,  it was shown by Ara\'ujo \cite[Section 10]{A} (see also \cite[Example 2]{A2001}) that the Birkhoff irregular set of the Bowen flow turns out to be of zero Lebesgue measure under physical noise.
\end{remark}

From the argument in \cite{OY}, we can compute the finite time flow Lyapunov exponent as
\begin{eqnarray}\label{flowlyap}
\lambda\left(t,{\bm x}_0,{\bm f}({\bm x}_0)\right)=\frac{1}{t}\log\sqrt{f_1\left((x(t),y(t)\right)^2+f_2\left(x(t),y(t)\right)^2}
\end{eqnarray}
where $(x(t),y(t))$ is the solution of \eqref{eqot} with an initial condition $(x(0),y(0))={\bm x_0}$.
We immediately find that $f_1((x(t),y(t))^2+f_2(x(t),y(t))^2$ is $0$ at $(x,y)=(0,0),(0,\pi),(\pi,0),(\pi,\pi)$ and is $1$ at $(x,y)=(0,\pi/2),(\pi/2,0),(\pi,\pi/2),(\pi/2,\pi)$.
Note that the four corners are dissipative saddles and the center $(\pi/2,\pi/2)$ is a source, and the four sides of the square form the heteroclinic connection.
In order to perform the numerical integration, the coordinate transformation
$z=\tan(x-\pi/2)$ and $w=\tan(y-\pi/2)$ is used, which enables one to avoid the error problem associated with computing vector fields near the equilibria. We then obtain the differential equation on $\mathbb{R}^2$
\begin{eqnarray}
\begin{cases}
\dot{z}=  w\sqrt{\frac{1+z^2}{1+w^2}} +az\\
\dot{w}= -z\sqrt{\frac{1+w^2}{1+z^2}} +aw.
\end{cases}
\end{eqnarray}

Set an initial point by $(x_0,y_0)=((3+\pi)/2,(3+\pi)/2)$ which is, according to our coordinate transformation, equivalent to $(z_0,w_0)=(\tan (3/2),\tan (3/2))$.
In Figure \ref{fig1}, we illustrate the numerical result of the finite time flow Lyapunov exponent with respect to this initial point for the system \eqref{eqot} with additive noises by using the following algorithm.

(STEP 1) Set an initial point $(z[0],w[0])=(\tan(3/2),\tan(3/2))$.

(STEP 2) Compute $(z[t],w[t])$ from $(z[t-1],w[t-1])$ by Runge-Kutta-Fehlberg method for $t=1,\cdots,T$. 

(STEP 3) Calculate $x[t]=\arctan(z[t])+\pi/2+\epsilon \omega_t$ and $y[t]=\arctan(w[t])+\pi/2+\epsilon \xi_t$, where $\omega_t$ and $\xi_t$ are independent random variables each of which has the uniform distribution $1_{[-1,1]}$, that is, $\omega_t$ and $\xi_t$ are randomly chosen uniformly from $[-1,1]$.

(STEP 4) Replace $(z[t],w[t])$ by $(\tan(x[t]-\pi/2),\tan(y[t]-\pi/2))$.

Repeating (STEP 2)--(STEP 4) for $t=1,\cdots,T$, we can calculate $\lambda\left(t,{\bm x}_0,{\bm f}({\bm x}_0)\right)$ by Eq.~\eqref{flowlyap} with $x[t]$ and $y[t]$.
In our simulation, we set parameters of step-size $h=0.001$, $T=10^7$, $a=0.03$ and the noise level $\epsilon=0.0001$. 

\begin{figure}[htbp]
\begin{center}
 \includegraphics[width=14cm,bb=0mm 0mm 200mm 105mm,clip]{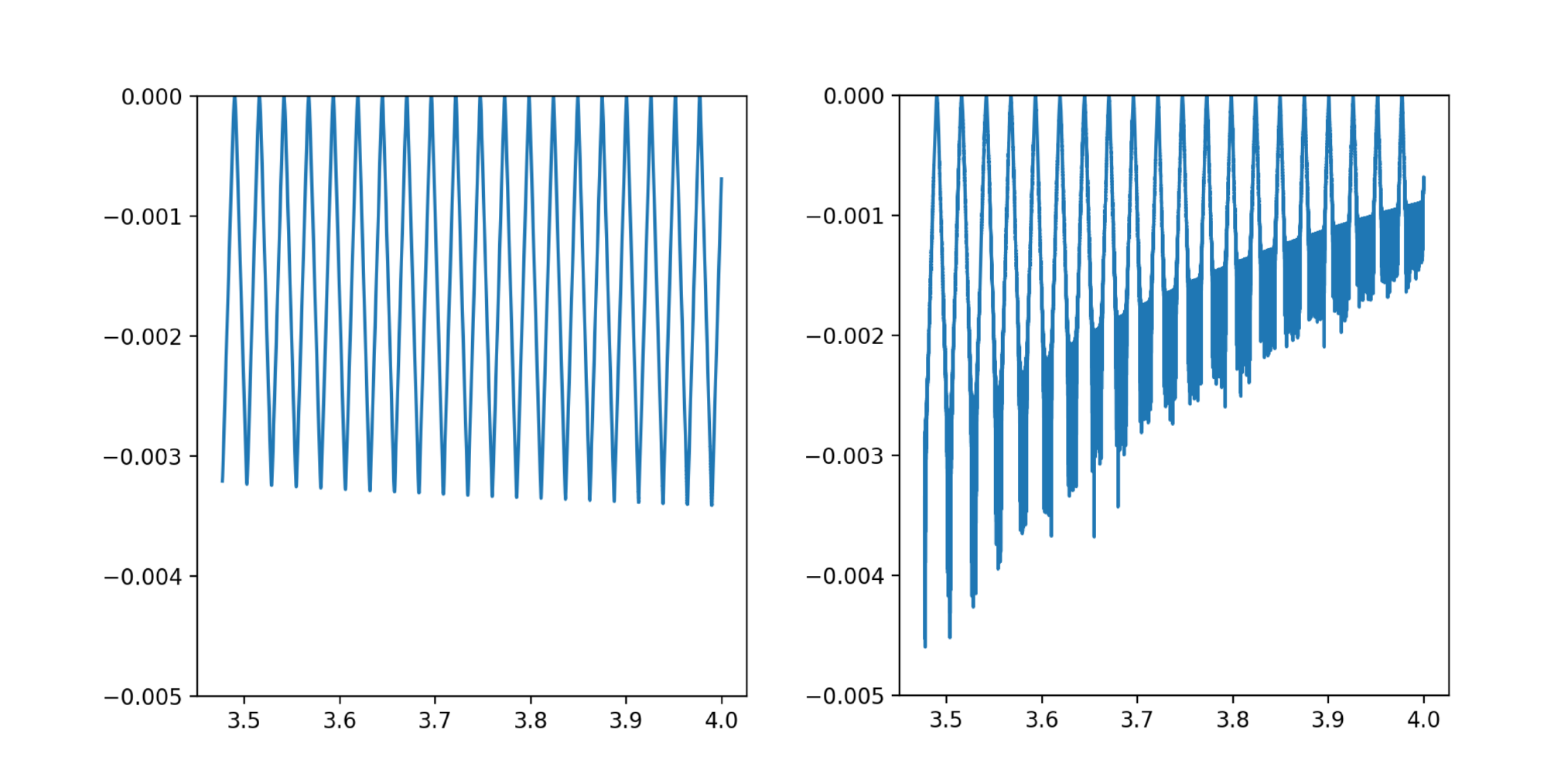}
 \end{center}
 \caption{The finite time flow Lyapunov exponent for the system \eqref{eqot} are illustrated without noise (left), with additive noise (right). They are plotted by $\log$ time scale.}
 \label{fig1}
\end{figure}

\begin{remark}
When we consider the perturbed system, the orbit may go out from $[0,\pi]^2$. However, a similar behavior arises in each region 
$\left[k\pi,(k+1)\pi\right]\times\left[\ell\pi,(\ell+1)\pi\right]$ with $k,\ell\in\mathbb{Z}$ because ${\bm f}$ is a periodic function. Just the directions of the rotation are different depending on whether $k+\ell$ is odd or even. Thus, more precisely, we use the following replacement of (STEP 3) in our experiment:
if $(x[t],y[t])\in \left[k\pi,(k+1)\pi\right]\times\left[\ell\pi,(\ell+1)\pi\right]$,
 \begin{eqnarray*}
 \left(-\tan(x[t]-\pi/2-k\pi),-\tan(y[t]-\pi/2-\ell\pi)\right)&\quad\text{$k+\ell$ is even,}\\
 \left(\tan(x[t]-\pi/2-k\pi),\tan(y[t]-\pi/2-\ell\pi)\right)&\quad\text{$k+\ell$ is odd.}
 \end{eqnarray*}
\end{remark}

\section{Non-physical noises}\label{s:npn}

In this section, we will prove that the Lyapunov irregular set of positive Lebesgue measure for a simple figure-8 type diffeomorphism in \cite{KLNS} vanishes when we add certain non-physical perturbation of impulsive type to the system.
Our random perturbation of impulsive type is not physical so that the result in this section is not consequence of Theorem \ref{main}, but Theorem \ref{propA} suggests possibility of generalization of Ara\'{u}jo's and our results.
One may find examples of random perturbations of impulsive type and its physical motivation in \cites{DPZ,GV}.

The model dealt with in this section was introduced in \cite{GGS} as follows.
Fix a constant $\kappa >1$ and numbers $a$, $b$ with $1<a<b<\kappa$.
Let $I=[a,b]$ and ${\bm H}$ denote the map $\mathbb{R}^2\ni(x,y)\mapsto \left(\kappa ^{-2}x,\kappa y\right)$.
We let $S_n=I\times\kappa ^{-n}I$ and $U_n=\kappa ^{-n}I\times I$, so that
\[
{\bm H}^n(S_n)=U_{2n}\quad\text{and}\quad {\bm H}^n:S_n\to U_{2n} \text{ is a diffeomorphism.}
\]
Furthermore, let ${\bm R}$ be the rotation of angle $\pi/2$ with center $\left(\frac{a+b}{2},\frac{a+b}{2}\right)$, i.e.,
\[
{\bm R}(x,y)=(a+b-y,x)\quad\text{for }(x,y)\in\mathbb{R}^2.
\]
Recall that a diffeomorphism on the plane is said to be \emph{compactly supported} if it equals to the identity outside of some ball centered at the origin.
The existence of such a simple model of a compactly supported diffeomorphism with an attracting homoclinic connection was proven in \cite{GGS}.

\begin{proposition}[Proposition 3.4 of \cite{GGS}]\label{propGGS}
There exists a compactly supported $C^{\infty}$ diffeomorphism ${\bm f}:\mathbb{R}^2\to\mathbb{R}^2$ together with positive integers $n_0$ and $k_0$ such that the following hold:
\begin{enumerate}
\item[(a)]
There is a neighborhood $V$ of the origin ${\bm o}=(0,0)\in\mathbb{R}^2$ such that
\[
\bigcup_{n\ge n_0}\bigcup_{0\le\ell\le n} {\bm f}^{\ell}(S_n)\subset V\quad\text{and}\quad {\bm f}\mid_{V}={\bm H}.
\]
In particular, ${\bm o}$ is a hyperbolic fixed point of saddle type for ${\bm f}$.
\item[(b)]
The stable manifold of ${\bm o}$ coincides with the unstable manifold of ${\bm o}$.
\item[(c)]
${\bm f}^{k_0}(U_n)=S_n$ for all $n\ge n_0$ and
\[
{\bm f}^{k_0}(x,y)={\bm R}(x,y)\quad\text{for all }(x,y)\in\left[0,\kappa^{-2n_0}\right]\times I.
\]
\end{enumerate}
In particular, for every $n\ge n_0$,
\[
{\bm f}^{n+k_0}(x,y)=\left(a+b-\kappa ^ny,\kappa ^{-2n}x\right)\in S_{2n}\quad\text{for all }(x,y)\in S_n.
\]
\end{proposition}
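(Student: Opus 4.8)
The plan is to \emph{construct} ${\bm f}$ directly, by prescribing two rigid pieces of data — the linear saddle ${\bm H}$ near the origin and the quarter-turn ${\bm R}$ as a return map — and then gluing them into a single compactly supported $C^\infty$ diffeomorphism that equals the identity at infinity. The geometric picture guiding the construction is a homoclinic loop: the unstable manifold of ${\bm o}$ leaves up the positive $y$-axis, is transported by the return dynamics back onto the stable manifold along the positive $x$-axis, and ${\bm R}$ is precisely the map realizing this transport on the relevant strips. The three conclusions (a), (b), (c) then correspond, respectively, to the local model, the routing of the unstable branch onto the stable one, and the rigidity of the return.

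First I would fix the core. I set ${\bm f}={\bm H}$ on an open box $V$ that contains ${\bm o}$ in its interior and contains the union of forward-orbit pieces $\bigcup_{n\ge n_0}\bigcup_{0\le\ell\le n}{\bm H}^\ell(S_n)$. A direct computation gives ${\bm H}^\ell(S_n)=\kappa^{-2\ell}I\times\kappa^{\ell-n}I$, so for $0\le\ell\le n$ every such set lies in $(0,b]\times(0,b]$; since $b<\kappa$, any box of the form $(-\delta,b+\delta)^2$ works for $V$, which makes the first part of (a) automatic and forces ${\bm o}$ to be a hyperbolic saddle (indeed a dissipative one, as $\det D{\bm H}=\kappa^{-1}<1$) with $W^s_{\mathrm{loc}}\subset\{y=0\}$ and $W^u_{\mathrm{loc}}\subset\{x=0\}$. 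Next I would build the return. The unstable strips $U_n=\kappa^{-n}I\times I$ accumulate on the $y$-axis segment $\{0\}\times I$, and one checks directly that ${\bm R}(U_n)=S_n$, so ${\bm R}$ carries the unstable-manifold segment onto the stable-manifold segment while reversing orientation. I would realize ${\bm R}$ as the $k_0$-th iterate of ${\bm f}$ over the complement of $V$: using a vector field generating the quarter-turn about $(\tfrac{a+b}{2},\tfrac{a+b}{2})$ on a neighborhood of $[a,b]^2$, I design ${\bm f}$ on an annular transport region so that $k_0$ successive passes through it compose to exactly ${\bm R}$ on the strip $[0,\kappa^{-2n_0}]\times I$, which yields (c); the integer $k_0$ is dictated by the number of steps needed to carry the exiting unstable strip around to the entering stable strip.

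The remaining work is to make ${\bm f}$ a single globally defined object. I would interpolate between the prescribed pieces and the identity outside a large ball using bump functions, checking at each stage that the Jacobian stays nonsingular so that ${\bm f}$ is a $C^\infty$ diffeomorphism, compactly supported by construction. Property (b) then follows from the routing: the branch of $W^u({\bm o})$ along $+y$ is carried by ${\bm f}^{k_0}={\bm R}$ onto the branch of $W^s({\bm o})$ along $+x$, and symmetrically for the opposite branches, so the global stable and unstable manifolds coincide. Finally the concluding formula is immediate: for $(x,y)\in S_n$ with $n\ge n_0$, part (a) keeps the orbit in $V$ for $n$ steps and gives ${\bm f}^n(x,y)={\bm H}^n(x,y)=(\kappa^{-2n}x,\kappa^n y)\in U_{2n}$, and then part (c) gives ${\bm f}^{n+k_0}(x,y)={\bm R}(\kappa^{-2n}x,\kappa^n y)=(a+b-\kappa^n y,\kappa^{-2n}x)\in S_{2n}$.

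The hard part will be the global gluing: reconciling the two rigid local models — the non-rotational hyperbolic ${\bm H}$ near ${\bm o}$ and the rotational return ${\bm R}$ — into one smooth diffeomorphism whose $k_0$-th iterate equals ${\bm R}$ \emph{exactly} (not merely approximately) on the strips, while keeping the map a diffeomorphism throughout the transport region and cutting it off to the identity. Concretely, one must arrange the transport region and the iterate count $k_0$ so that the thin strips $U_n$ are carried rigidly by the composite, and verify that the interpolation between ${\bm H}$, the rotational transport, and the identity never destroys injectivity. The local verifications of (a) and of the final formula are routine once $V$ is chosen; the genuine content is this existence-by-construction of a single compactly supported $C^\infty$ diffeomorphism exhibiting the prescribed local and return behavior.
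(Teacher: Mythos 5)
You should first note what the paper itself does with this statement: it gives \emph{no} proof at all --- Proposition \ref{propGGS} is quoted verbatim from \cite{GGS}*{Proposition 3.4}, so your attempt can only be measured against the construction carried out there. Your outline does follow the same broad strategy as that construction (a rigid linear saddle ${\bm H}$ near ${\bm o}$, a rigid quarter-turn return ${\bm R}$, glued into one compactly supported diffeomorphism), and your routine verifications are correct: ${\bm H}^{\ell}(S_n)=\kappa^{-2\ell}I\times\kappa^{\ell-n}I$ for $0\le\ell\le n$, the dissipativity and orientation of the saddle, ${\bm R}$ carrying the $U$-type strips onto the $S$-type strips, and the derivation of the final displayed formula from (a) and (c).

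There are, however, two genuine gaps. First, your choice $V=(-\delta,b+\delta)^2$ is inconsistent with your transport mechanism. The square $[a,b]^2$, the center of rotation $\left(\frac{a+b}{2},\frac{a+b}{2}\right)$, and the quarter-annulus swept between $\{0\}\times I$ and $I\times\{0\}$ all meet $(0,b]^2\subset V$, where you have already forced ${\bm f}={\bm H}$; in particular, the first of the $k_0$ iterates applied to the strip $\left[0,\kappa^{-2n_0}\right]\times I$ is necessarily ${\bm H}$ --- a vertical stretch that exits $V$ through the top (using $\kappa a>b$) --- and not a step of any rotation, so ${\bm f}$ cannot be generated by ``a vector field generating the quarter-turn about $\left(\frac{a+b}{2},\frac{a+b}{2}\right)$ on a neighborhood of $[a,b]^2$.'' In fact the union in (a) lies in a thin $L$-shaped region hugging the two axis segments (on ${\bm H}^{\ell}(S_n)$ one has $xy^{2}\in\kappa^{-2n}[a^3,b^3]$), and $V$ should be taken that thin, with the return routed through way-stations \emph{disjoint} from $V$ whose transit maps compose, after the initial factor ${\bm H}$, exactly to ${\bm R}\circ{\bm H}^{-1}$.

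Second --- and you flag this yourself as ``the hard part'' --- the entire content of the proposition is the existence of \emph{one} compactly supported $C^{\infty}$ diffeomorphism simultaneously extending ${\bm H}$ on $V$ and the prescribed transit maps, with the \emph{exact} identity ${\bm f}^{k_0}={\bm R}$ on the strip. ``Interpolating with bump functions and checking that the Jacobian stays nonsingular'' does not deliver this: a nonvanishing Jacobian guarantees only local injectivity, so such an interpolation between two rigid models need not be a diffeomorphism, and the exactness of the composed return is a rigidity constraint, not something obtained perturbatively. What is actually needed, and what \cite{GGS} supplies, is a gluing/extension argument producing a single diffeomorphism isotopic to the identity from finitely many orientation-preserving embeddings prescribed on pairwise disjoint compact pieces, arranged symmetrically on the opposite branches so that (b) holds for the full stable and unstable manifolds and not just one loop. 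Since your proposal defers precisely this step, it is a plausible road map rather than a proof of the statement it addresses.
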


In \cite[Theorem 1.3]{KLNS}, all points in boxes $S_n$ for $n\ge n_0$ are shown to be contained in the Lyapunov irregular set although they are \emph{not} contained in the Birkhoff irregular set at all.
By Theorem \ref{main}, random dynamical systems consisting of ${\bm f}$ and physical noises have no Lyapunov irregular set in the sense of Lebesgue measure.
In addition, we will show that the following particular impulsive type perturbation, which is not physical, makes ${\bm f}$ have no Lyapunov irregular set of positive Lebesgue measure  in $\bigcup _{n\geq n_0} S_n$. 
For the setup of our perturbation, 
we let  $\Omega=\{n\in\mathbb{N}\mid n\ge n_0\}  ^{\mathbb{N}_0}$, and
\begin{equation}\label{eq5}
{\bm f}_{\omega}  \coloneqq \bm I_{\omega _0} \circ \bm f \quad \text{for  $\omega=(\omega _0, \omega _1,\ldots )\in\Omega$},
\end{equation}
where ${\bm f}$ is the surface map in Proposition \ref{propGGS} and
\[
\bm I_{ \omega _0}(x,y) =
\begin{cases}
(x,y) &((x,y)\not\in S_n\text{ for any }n\ge n_0)\\
\left(x, \kappa^{n-\omega _0}y\right) &((x,y)\in S_n\text{ for some }n\ge n_0)
\end{cases}
\]
and we consider the  random iteration 
\[
\bm f^n_{\omega}\coloneqq \bm f_{\sigma ^{n-1}\omega} \circ \bm f_{\sigma ^{n-2}\omega} \circ \cdots \bm f_\omega  =(\bm I_{\omega _{n-1}} \circ \bm f) \circ (\bm I_{\omega _{n-2}} \circ \bm f)  \circ \cdots \circ (\bm I_{\omega _0} \circ \bm f),
\]
where $\sigma$ is the full shift on $\Omega$. 
Namely, our random dynamical system is  position-dependent and the jump from $S_n$ into $S_{\omega_0}$ occurs only when an orbital point hits $S_n$ for some $n\ge n_0$.
See Figure \ref{fig2} below.
Such an impulsive noise  reminds us of the  noise considered in \cite[Section 12]{A}, where the perturbation is 
given only around a homoclinic tangency (compare the figure below  with Figure 6 in  \cite{A}).
However, although the  noise in  \cite[Section 12]{A} was physical, our noise \eqref{eq5} is not physical. 
Indeed,
$\left\{ \bm f^n_{\omega} (z) \mid \omega \in \Omega \right\}$ is at most  countable for all $z$ and $n$ by construction, 
so it does not  contain an open ball, that is, (any natural version of) Condition (A) in Definition \ref{d:phy} is violated.

\begin{figure}[ht]
\begin{center}
\includegraphics[width=14cm,bb=0mm 0mm 157mm 75mm,clip]{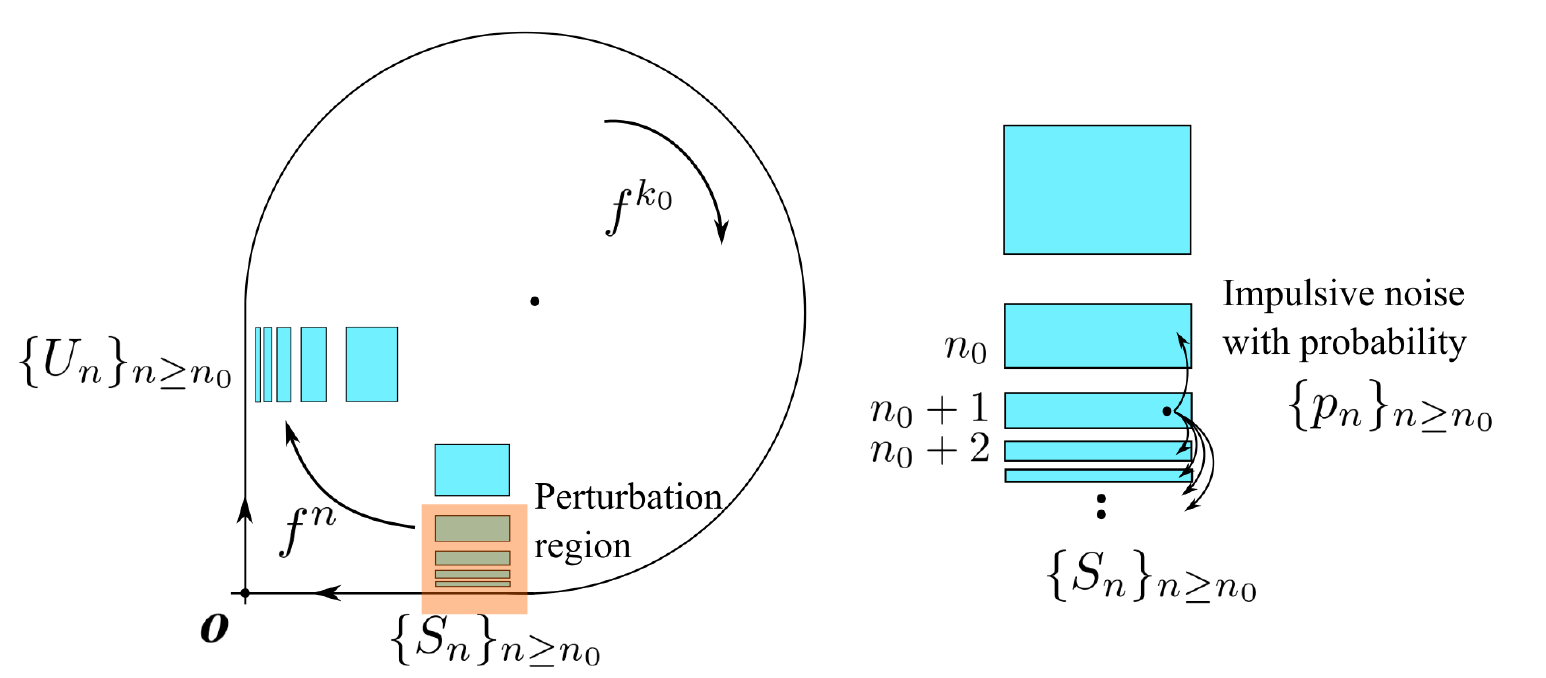}
\end{center}
\caption{The image of the model with perturbation of impulsive type.}
\label{fig2}
\end{figure}

We further suppose that $\Omega$ is equipped with a Bernoulli measure $\mathbb{P}$ such that $\mathbb{P}(\{\omega\in\Omega\mid\omega_0=n\})=p_n$ for $n\ge n_0$ where $p_n$'s are nonnegative and $\sum_{n\ge n_0}p_n=1$ and $\bar{n}\coloneqq \sum_{n\ge n_0}np_n<\infty$.
Then, in contrast to the deterministic case \cite[Theorem 1.3]{KLNS}, we have the unique Lyapunov exponent on $\bigcup_{n\ge n_0}S_n$ for this random dynamical system as follows.

\begin{theorem}\label{propA}
Let ${\bm f}_{\omega}$ be given by (\ref{eq5}).
Then for $\mathbb P$-almost every $\omega\in\Omega$ and every  $n\ge n_0$, $z\in S_n$,
\[
\lim_{n\to\infty}\frac{1}{n}\log\left\lVert D{\bm f}_{\omega}^n(z)v\right\rVert =0
\]
holds for any nonzero vector $v$. 
Consequently, ${\bm f}_{\omega}$ has no Lyapunov irregular set of positive Lebesgue measure in $\bigcup_{n\ge n_0}S_n$ for $\mathbb P$-almost every $\omega\in\Omega$.
\end{theorem}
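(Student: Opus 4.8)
The plan is to track the \emph{itinerary of boxes} visited by a point $z\in S_n$ and to reduce the entire derivative to an explicit product of $2\times 2$ matrices. First I would observe that the impulse $\bm I_{\omega_j}$ acts nontrivially only when an orbit point lands in some $S_m$, and by Proposition~\ref{propGGS} this happens precisely once per homoclinic excursion: starting in $S_{N_0}$ with $N_0=n$, the map $\bm f^{N_0+k_0}$ carries $S_{N_0}$ into $S_{2N_0}$, whereupon $\bm I$ fires and resets the index to $N_1:=\omega_{N_0+k_0-1}$, so the point enters $S_{N_1}$. Iterating, the orbit performs excursions $S_{N_j}\to S_{N_{j+1}}$ of length $N_j+k_0$, completing excursion $j$ at time $T_j=\sum_{i=0}^{j}(N_i+k_0)$ with $N_{j+1}=\omega_{T_j-1}$. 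Since the reset positions $T_0-1<T_1-1<\cdots$ are read from an i.i.d.\ Bernoulli sequence at strictly increasing indices determined by earlier coordinates, a standard stopping-time argument shows $(N_j)_{j\ge 1}$ is i.i.d.\ with $\mathbb P(N_j=m)=p_m$. Crucially, this itinerary depends only on $\omega$ and on the starting index $n$, \emph{not} on the particular $z\in S_n$, because $\bm f^{N_j+k_0}$ and $\bm I$ act on the whole box $S_{N_j}$ in the same way.

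Next I would compute the derivative cocycle. Using the explicit formula $\bm f^{N_j+k_0}(x,y)=(a+b-\kappa^{N_j}y,\kappa^{-2N_j}x)$ together with the linear impulse, whose $y$-coordinate is scaled by $\kappa^{2N_j-N_{j+1}}$, the derivative accumulated over the complete $j$-th excursion (impulse included) is the \emph{constant} anti-diagonal matrix $M_j=\left(\begin{smallmatrix}0&-\kappa^{N_j}\\\kappa^{-N_{j+1}}&0\end{smallmatrix}\right)$. The key algebraic point is that the product $P_J:=M_{J-1}\cdots M_0$ \emph{telescopes}: each factor $\kappa^{N_i}$ produced at one step cancels the factor $\kappa^{-N_i}$ at the next, so only the endpoints survive and $P_J$ is diagonal or anti-diagonal with entries built solely from $N_0$ and $N_J$. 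Hence $\lVert P_J\rVert\le\kappa^{N_0}$, a constant along the orbit, while $\lVert P_J^{-1}\rVert\le\kappa^{N_J}$. Inside an incomplete excursion $J$ at step $\ell\le N_J+k_0$, the partial derivative is $D\bm H^\ell$ up to a factor coming from the $k_0$ fixed return steps, so it and its inverse are bounded by $C_0\kappa^{N_J}$ and $C_0\kappa^{2N_J}$ respectively, with $C_0$ a uniform constant (finite since $\bm f$ is compactly supported $C^\infty$).

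Combining the two factors at a time $n=T_{J-1}+\ell$ yields $\lVert D\bm f_\omega^n(z)\rVert\le C_0\kappa^{N_0+N_J}$ and $\lVert (D\bm f_\omega^n(z))^{-1}\rVert\le C_0\kappa^{3N_J}$, so both exponents are $O(N_J)$. Dividing by $n\ge T_{J-1}\ge J(n_0+k_0)$, the finite-time Lyapunov exponent is squeezed between $\pm O(N_J/J)$. The main obstacle — and the only place the hypothesis $\bar n<\infty$ enters — is to control the \emph{mid-excursion} times, where the local derivative genuinely reaches $\kappa^{N_J}$ and would spoil convergence if a single index $N_J$ were comparable to the elapsed time. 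I would dispose of this by proving $N_J/J\to 0$ almost surely: for i.i.d.\ nonnegative $N_J$ with $\sum_m m\,p_m=\bar n<\infty$ one has $\sum_J\mathbb P(N_J>\varepsilon J)\le\varepsilon^{-1}\bar n<\infty$, so Borel--Cantelli gives $N_J/J\to 0$ off a $\mathbb P$-null set.

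On this full-measure set $N_J/T_{J-1}\to 0$, whence $\limsup_n\frac1n\log\lVert D\bm f_\omega^n(z)v\rVert\le 0$ and, via the inverse bound $\lVert D\bm f_\omega^n(z)v\rVert\ge\lVert v\rVert/\lVert (D\bm f_\omega^n(z))^{-1}\rVert$, also $\liminf_n\frac1n\log\lVert D\bm f_\omega^n(z)v\rVert\ge 0$, for every nonzero $v$ simultaneously. Thus the Lyapunov exponent equals $0$. Finally, intersecting the countably many full-measure $\omega$-sets over $n\ge n_0$ (legitimate since the itinerary, and hence the exceptional set, depends only on $\omega$ and $n$, uniformly in $z\in S_n$) produces a single full-measure set on which the conclusion holds for all $n\ge n_0$, all $z\in S_n$ and all nonzero $v$; in particular the Lyapunov irregular set inside $\bigcup_{n\ge n_0}S_n$ is empty, a fortiori Lebesgue-null.
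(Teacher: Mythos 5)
Your proposal is correct, and its dynamical core is essentially identical to the paper's: you track the itinerary of boxes $S_{N_0}\to S_{N_1}\to\cdots$, note that each complete excursion (impulse included) contributes the anti-diagonal matrix $\left(\begin{smallmatrix}0&-\kappa^{N_j}\\ \kappa^{-N_{j+1}}&0\end{smallmatrix}\right)$, observe that these products telescope so that only $N_0$ and the current index $N_J$ survive, control mid-excursion times by the explicit form of $\bm H$ plus a uniform constant for the $k_0$ return steps, and thereby squeeze the finite-time exponents between quantities of order $\pm N_J/n$. This matches the paper's inductive formulas for $D{\bm f}_{\omega}^{N(2k-1)+\ell}(z)$ and $D{\bm f}_{\omega}^{N(2k)+\ell}(z)$. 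The genuine difference is the probabilistic lemma used to prove that $N_J$ grows sublinearly almost surely. You argue via a stopping-time argument that the indices $(N_j)_{j\ge1}$ read along the itinerary are i.i.d.\ with law $(p_m)$, then use $\sum_J\mathbb P(N_J>\varepsilon J)\le\bar n/\varepsilon<\infty$ and the first Borel--Cantelli lemma, finally removing the dependence of the null set on the starting box $n$ by a countable intersection. The paper instead applies the Chacon--Ornstein lemma to $\psi(\omega)=\omega_0$ on the shift $(\sigma,\mathbb P)$, obtaining $\lim_{n\to\infty}(\sigma^n\omega)_0/(n+1)=0$ on a single full-measure set independent of the starting box and of $z$; since this bound controls \emph{every} coordinate of $\omega$, no distributional claim about the randomly-indexed readings $N_j$ is needed at all. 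Your route is more elementary, but it genuinely exploits the Bernoulli (i.i.d.) structure of $\mathbb P$ through the stopping-time step, whereas the paper's argument uses only that $(\sigma,\mathbb P)$ is measure preserving together with $\bar n<\infty$ --- which is exactly why, as the paper remarks after the theorem, its proof extends verbatim to Markov shifts, while your version would require reworking the independence argument in that setting.
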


\begin{proof}
Let $z\in S_{n_1}$ for some $n_1\ge n_0$.
For $\omega\in\Omega$, set $N(1)=n_1+k_0$ and $n_2\equiv n_2(\omega ) =\left(\sigma^{N(1)-1}\omega\right)_0$, so that 
\[
{\bm f}^{N(1)}_{\omega}(z)={\bm f}_{\sigma^{N(1)-1}\omega}\circ \cdots\circ{\bm f}_{\omega}(z)= \bm I_{(\sigma^{N(1)-1}\omega )_0}\circ \bm f^{N(1)} (z)\in S_{n_2}
\]
since the orbit of $z$ by $\bm f$ first returns to $\bigcup _{n\geq n_0} S_n$ at the time $N(1)$ and ${\bm f}^{N(1)}(z)\in S_{2n_1}$ jumps into  ${\bm f}^{N(1)}_{\omega}(z)$ by the impulsive noise $\bm I_{\left(\sigma^{N(1)-1}\omega\right)_0} = \bm I_{n_2}: (x',y')\mapsto (x', \kappa ^{2n_1 - n_2}y')$ on $S_{2n_1}$. 
Then we inductively define $N(k)\equiv N(k;\omega )=\sum_{i=1}^k(n_i+k_0)$ and $n_{k+1}\equiv n_{k+1}(\omega ) =\left(\sigma^{N(k)-1}\omega\right)_0$, so that 
\[
{\bm f}^{N(k)}_{\omega}(z)
= {\bm f}_{\sigma^{N(k)-1}\omega}
\circ\cdots\circ{\bm f}_{\sigma^{N(k-1)}\omega}\circ{\bm f}_{\omega}^{N(k-1)}(z)
= \bm I_{(\sigma^{N(k)-1}\omega )_0}\circ \bm f^{n_k + k_0} \circ{\bm f}_{\omega}^{N(k-1)}(z)
\in S_{n_{k+1}}
\]
since $\bm f^{n_k +k_0} \circ{\bm f}_{\omega}^{N(k-1)}(z) \in S_{2n_k}$ jumps into ${\bm f}^{N(k)}_{\omega}(z)$ by the impulsive noise $\bm I_{(\sigma^{N(k)-1}\omega )_0} = \bm I_{n_{k+1}}: (x',y')\mapsto (x', \kappa ^{2n_k - n_{k+1}}y')$ on $S_{2n_k}$, where  $N(0)\coloneqq  0$.
Hence we have
\[
D\bm f_{\omega}^{N(k)}(z) =\begin{pmatrix}
1 & 0 \\
0 & \kappa ^{2n_k -n_{k+1}}
\end{pmatrix}
\begin{pmatrix}
0 & -\kappa^{n_k}\\
\kappa^{-2n_{k}} & 0
\end{pmatrix}D\bm f_{\omega}^{N(k-1)}(z)
=\begin{pmatrix}
0 & -\kappa^{n_k}\\
\kappa^{-n_{k+1}} & 0
\end{pmatrix}D\bm f_{\omega}^{N(k-1)}(z),
\]
and by induction we get
\begin{eqnarray*}
D{\bm f}_{\omega}^{N(2k-1)+\ell}(z)&=&
\begin{pmatrix}
0 & (-1)^k\kappa^{n_1-2\ell}\\
(-1)^{k-1}\kappa^{-n_{2k}+\ell} & 0
\end{pmatrix}
\quad\qquad \text{for }0\le\ell\le n_{2k},\\
D{\bm f}_{\omega}^{N(2k)+\ell}(z)&=&
\begin{pmatrix}
(-1)^k & 0\\
0 & (-1)^{k}\kappa^{n_1-n_{2k+1}+\ell}
\end{pmatrix}
\quad\quad\quad\quad\text{for } 0\le\ell\le n_{2k+1}.
\end{eqnarray*}
From this, for any nonzero vector $v=\begin{pmatrix} v_1\\v_2\end{pmatrix}$, integers $k\ge 1$ and $0\le\ell< n_{2k}$, we have
\begin{eqnarray*}
\log\left\lVert D{\bm f}_{\omega}^{N(2k-1)+\ell}(z)v\right\rVert
&=&\log\left(\kappa^{2(n_1-2\ell)}v_1^2+\kappa^{2(-n_{2k}+\ell)}v_2^2\right)^{\frac{1}{2}}\\
&&\begin{cases}
\le \log\left(\kappa^{\max\{2n_1,1\}}(v_1^2+v_2^2)\right)^{\frac{1}{2}}=
\log \kappa^{ n_1} + \log \Vert v\Vert 
\\
\ge\log\left(\kappa^{\min\{2(n_1-2n_{2k}),-2n_{2k}\}}(v_1^2+v_2^2)\right)^{\frac{1}{2}}\ge
 \log \kappa^{-2n_{2k}} + \log \Vert v\Vert.
\end{cases}
\end{eqnarray*}
From the construction of ${\bm f}$ (recall the condition (c) in Proposition \ref{propGGS}), there exists a positive number $C$ such that for  $n_{2k}\le \ell\le n_{2k}+k_0$,
\[
\log\left\lVert D{\bm f}_{\omega}^{N(2k-1)}(z)v\right\rVert-\log C\le \log\left\lVert D{\bm f}_{\omega}^{N(2k-1)+\ell}(z)v\right\rVert \le \log\left\lVert D{\bm f}_{\omega}^{N(2k-1)}(z)v\right\rVert+\log C.
\]
Thus, for any nonzero vector $v$, integers $k\geq 1$ and $0\le\ell< n_{2k}+k_0$,  
\[
\log \kappa^{-2n_{2k}} + \log \Vert v\Vert 
-\log C
\le \log\left\lVert D{\bm f}_{\omega}^{N(2k-1)+\ell}(z)v\right\rVert
\le \log \kappa^{ n_1} + \log \Vert v\Vert 
+\log C.
\]
Furthermore,  given $\epsilon >0$ and $\omega\in\Omega$, it holds that
\[
\frac{n_{2k}}{N(2k-1)+\ell } \leq \frac{n_{2k}}{N(2k-1)}
= \frac{\left(\sigma^{N(2k-1)-1}\omega\right)_0}{N(2k-1)}
\leq \limsup_{n\to\infty} \frac{\left(\sigma^{n}\omega\right)_0}{n+1} + \epsilon 
\]
for any   sufficiently large $k$ and any $0\leq \ell \leq n_{2k}+k_0$. Thus,  with similar estimates for $\log\left\lVert D{\bm f}_{\omega}^{N(2k)+\ell}(z)v\right\rVert$, $0\leq \ell \leq n_{2k+1} +k_0$, we have
 \[
 -2\log \kappa \cdot \limsup_{n\to\infty} \frac{\left(\sigma^{n}\omega\right)_0}{n+1}  \leq  \liminf _{n\to \infty} \frac{1}{n} \log\left\lVert D{\bm f}_{\omega}^{n}(z)v\right\rVert  \leq  \limsup _{n\to \infty} \frac{1}{n} \log\left\lVert D{\bm f}_{\omega}^{n}(z)v\right\rVert \leq 0
 \]
for every $\omega\in\Omega$.
On the other hand, since the Koopman operator of $(\sigma,\mathbb{P})$: $\psi\to\psi\circ\sigma$ acts on $L^1(\Omega,\mathbb{P})$ due to the invariance of  $\mathbb P$ for $\sigma$,  by applying the Chacon-Ornstein lemma (Lemma B in \cite[Chapter III]{Fog}) to the Koopman operator of $(\sigma,\mathbb{P})$ and $\psi(\omega)\coloneqq(\omega)_0$ for $\omega\in\Omega$, where $\psi$ is integrable: $\int_{\Omega}\lvert\psi\rvert d\mathbb{P}=\sum_{n\ge n_0}np_i=\bar{n}<\infty$, we get
\[
\lim_{n\to\infty}\frac{(\sigma^{n}\omega)_0}{n+1}=\lim_{n\to\infty}\frac{(\sigma^{n}\omega)_0}{n}=\lim_{n\to\infty}\frac{\psi(\sigma^{n}\omega)}{\sum_{k=0}^{n-1}1_{\Omega}(\sigma^k\omega)}=0
\]
$\mathbb{P}$-almost surely.
In conclusion, we have
\[
\lim_{n\to\infty}\frac{1}{n}\log\left\lVert D{\bm f}_{\omega}^{n}(z)v\right\rVert=0
\]
for $\mathbb P$-almost every $\omega\in\Omega$.
\end{proof}

\begin{remark}
We only use the fact that $(\sigma,\mathbb{P})$ is a probability-preserving system and $\bar{n}$ is finite in the proof of Theorem \ref{propA}.
Hence the result works for any Markov shift over $\Omega$ with these conditions.
Furthermore, the same result (the existence and uniqueness of the Lyapunov exponent under non-physical perturbation)  may  hold   for not only a system with an attracting homoclinic connection but also one with an attracting heteroclinic connection such as the system from \S 4 as long as randomness violates long-time trapped orbits around the connection.
Therefore, Theorem \ref{propA} may offer the key observation to develop Ara\'{u}jo's \cite{A} and our results into non-physical perturbation.
\end{remark}

\section*{Acknowledgments}
This work was partially supported by JSPS KAKENHI Grant Numbers 19K14575, 19K21834 and 21K20330.
 We are sincerely grateful to  Vitor Ara\'ujo and Pablo G.~Barrientos for fruitful discussions and valuable comments.

\end{document}